\newtheorem{thm}{Theorem}[section]
\newtheorem{defn}{Definition}[section]
\newtheorem{prop}{Proposition}[section]
\journal{...}
\begin{document}

\begin{frontmatter}

\title{Cohomology and Deformations of n-Hom-Liebniz algebra morphisms}

\author{RB Yadav \fnref{myfootnote}\corref{mycorrespondingauthor}}
\address{Sikkim University, Gangtok, Sikkim, 737102, \textsc{India}}
\cortext[mycorrespondingauthor]{Corresponding author}
\ead{rbyadav15@gmail.com}
%
\begin{abstract}
In this paper, we  introduce cohomology of n-Hom-Liebniz algebra morphisms and formal deformation theory of n-Hom-Liebniz algebra morphisms .

\end{abstract}

\begin{keyword}
\texttt{Hochschild cohomology, formal deformations, modules}
\MSC[2010] 13D03 \sep 13D10\sep 14D15 \sep 	16E40
\end{keyword}

\end{frontmatter}


\section{Introduction}\label{rbsec1}
  M. Gerstenhaber introduced algebraic deformation theory  in \cite{MG1},\cite{MG2},\cite{MG3}, \cite{MG4}, \cite{MG5}. He studied deformation theory of associative algebras.  Deformation theory of associative algebra morphisms was studied by M. Gerstenhaber and S.D. Schack \cite{GS1}, \cite{GS2}, \cite{GS3}. Deformation theory of Lie algebras was studied by Nijenhuis and Richardson \cite{NR1}, \cite{NR2}. Algebraic deformations of modules were first studied  by Donald and Flanigan \cite{DF}. Deformation theory of Leibniz algebra morphisms was studied in \cite{AM}. Deformation theory of module homomorphisms was studied in \cite{RLYY}. Cohomology and deformations of n-Hom-Liebniz algebras were introduced in \cite{NM}.\\
 Above works inspire us to work on cohomology and deformations of n-Hom-Liebniz algebra morphisms.

Organization of the paper is as follows. In Section \ref{rbsec2}, we recall some definitions and results.  In Section \ref{rbsec3}, we introduce  deformation complex and  deformation cohomology of an n-Hom-Liebniz algebra homomorphism. In Section \ref{rbsec4}, we introduce  deformation of  an n-Hom-Liebniz algebra  homomorphism. In Section \ref{rbsec5}, we introduce equivalence of  deformations of  an n-Hom-Liebniz algebra  homomorphism.


\section{Preliminaries}\label{rbsec2}
From \cite{NM}, we recall following definitions.
\begin{defn}
  A representation of a multiplicative n-Hom-Leibniz algebra $(L, [.,\cdots,.], \alpha)$
is a pair $(M, \alpha_M)$, where M is a vector space and $\alpha_M : M \to M$ is a linear map, equipped
with n actions
$[\cdots,\cdots,\cdots]_i : L^{\otimes i}\otimes M \otimes L^{\otimes n-1-i} \to M, \;\;0 \le i \le n-1$,
satisfying $(2n-1)$ equations which are obtained from
\begin{equation}\label{}
  [[x_1,\cdots,x_n],\alpha( y_1),\cdots,\alpha( y_{n-1})]=\sum_{i=1}^{n}[\alpha(x_1),\cdots, [x_i, y_1,\cdots, y_{n-1}],\alpha(x_{i+1}),\cdots,\alpha(x_n)]
\end{equation}
by letting exactly one of the variables $x_1, \cdots, x_n, y_1,\cdots, y_{n-1}$ be in M (hence the corresponding $\alpha$ should be replaced by $\alpha_M$ ) and all others in L.
\end{defn}

\begin{defn}\label{rbd2}
  Let $(M, \alpha_M)$ be a representation of the multiplicative n-Hom-Leibniz algebra $(L, [.,\cdots,.], \alpha)$. We define  $C^p(L,M)$  as the collection of  linear maps $f : L\otimes (D^{n-1}(L))^{\otimes p-1} \to M$ such that $\alpha_M \circ f = f \circ (\alpha \otimes \bar{\alpha}^ {\otimes p-1}$). We define $$\delta_p : C^p(L,M) \to  C^{p+1}(L,M) $$ by
\begin{align}\label{}
   &\delta^p(f)(z,X_1,\cdots ,X_p) \nonumber\\
   &=\sum_{1\le i<j}^{p}(-1)^jf(\alpha(z),\bar{\alpha}(X_1),\cdots,\bar{\alpha}(X_{i-1}),[X_i,X_j],\bar{\alpha}(X_{i+1}),
   \cdots,\hat{X_j},\cdots,\bar{\alpha}(X_p)) \nonumber\\
   & + \sum_{i=1}^{p}(-1)^if([z,X_i], \bar{\alpha}(X_1),\cdots,\hat{X_i},\cdots,\bar{\alpha}(X_p) \nonumber\\
   & + \sum_{i=1}^{p}(-1)^{i+1}[f(z,X_1,\cdots,\hat{X_i},\cdots,X_p),\bar{\alpha}^{\otimes p-1}(X_i)]_0 \nonumber\\
   & +(X_{1.\alpha}f( ,X_2,\cdots ,X_p)).\alpha^{p-1}(z),
\end{align}
where
\begin{eqnarray*}
   && (X_{1.\alpha}f( ,X_2,\cdots ,X_p)).\alpha^{p-1}(z) \nonumber \\
   &&= \sum_{i=1}^{n-1}[\alpha^{p-1}(z),\alpha^{p-1}(X_1^1),\cdots, f(X_1^i,X_2,\cdots,X_p),\cdots), \alpha^{p-1}(X_1^p)]_i\nonumber
\end{eqnarray*}
The last two terms in the definition of the coboundary make use of the n actions
$[\cdots,\cdots,\cdots]_i : L^{\otimes i}\otimes M \otimes L^{\otimes n-1-i} \to M, \;\;0 \le i \le n-1$.
Elements of $C^p(L,M)$ are called p-cochains.
\end{defn}

\begin{defn}
  A formal one-parameter deformation of a n-Hom-Leibniz algebra  $(L, [.,\cdots,.], \alpha)$ is a map $f_t : L[[t]]^{\otimes n}\to  L[[t]]$,
where tensor product taken over $K[[t]]$, such that for $ X = (x_1, x_2,\cdots , x_n) \in L^{\otimes n}$, $f_t(X) =\sum_{i=0}^{\infty}t^iF_i(X)$
for some $F_i :L^{\otimes n}\to L$, $i \ge 1$, $F_0$ being the bracket in L, such that
\begin{eqnarray}\label{rbd3}
   && f_t(f_t(X), \alpha(y_1),\cdots , \alpha(y_{n-1})) \nonumber \\
   && =\sum_{i=1}^{n} f_t(\alpha(x_1),\cdots , \alpha(x_{i-1}), f_t(x_i, y_1,\cdots, y_{n-1}), \alpha(x_{i+1}),\cdots, \alpha(x_n))
\end{eqnarray}

 \ref{rbd3} is equivalent to
 \begin{eqnarray}\label{rbd4}
  &&\sum_{i+j=l}  F_i(F_j(X), \alpha(y_1),\cdots , \alpha(y_{n-1}))\nonumber \\
  &&= \sum_{i=1}^{n}\sum_{j+k=l} F_j(\alpha(x_1),\cdots , \alpha(x_{i-1}), F_k(x_i, y_1,\cdots, y_{n-1}), \alpha(x_{i+1}),\cdots, \alpha(x_n)),\nonumber\\
    &&
 \end{eqnarray}
for all  integers $0\le l$.
\end{defn}
From \ref{rbd4}, we have
\begin{eqnarray}
   &&  [(F_l(X), \alpha(y_1),\cdots , \alpha(y_{n-1})]+ F_l([X], \alpha(y_1),\cdots , \alpha(y_{n-1}))\nonumber \\
   &&- \sum_{i=1}^{n}[\alpha(x_1),\cdots , \alpha(x_{i-1}), F_l(x_i, y_1,\cdots, y_{n-1}), \alpha(x_{i+1}),\cdots, \alpha(x_n)]\nonumber\\
   &&-\sum_{i=1}^{n} F_l(\alpha(x_1),\cdots , \alpha(x_{i-1}), [x_i, y_1,\cdots, y_{n-1}], \alpha(x_{i+1}),\cdots, \alpha(x_n))\nonumber\\
   &&= \sum_{i=1}^{n}\sum_{\substack{j+k=l\\j,k>0}} F_j(\alpha(x_1),\cdots , \alpha(x_{i-1}), F_k(x_i, y_1,\cdots, y_{n-1}), \alpha(x_{i+1}),\cdots, \alpha(x_n))\nonumber \\
   &&-\sum_{\substack{i+j=l\\ i,j>0}}  F_i(F_j(X), \alpha(y_1),\cdots , \alpha(y_{n-1}))\nonumber  \\
   &&
\end{eqnarray}

\section{ Deformation complex of n-Hom-Liebniz algebra morphism}\label{rbsec3}

\begin{defn}
Let  $(L, [.,\cdots,.], \alpha)$, $(M, [.,\cdots,.], \beta)$ be n-Hom-Liebniz algebras and $\phi:M\to N$ be a  n-Hom-Liebniz algebra morphism. Let $(C^{\ast}(L;L),\delta)$,  $(C^{\ast}(M;M),\delta)$ and  $(C^{\ast}(L;M),\delta)$ be  as defined in \ref{rbd2} with coeffiecients in $L$, $M$ and $M$  respectively.
We define
$$C^p(\phi)=C^p(L;L)\oplus C^p(M;M)\oplus C^{p-1}(L;M),$$ for all $p\in \mathbb{N}$ and $C^0(\phi)=0$.
For any n-Hom-Liebniz algebra morphism $\phi:M\to N $, $u\in C^p(L;L)$, $v\in C^p(M;M)$, define    $\phi u:L\otimes (D^{n-1}(L))^{\otimes p-1} \to M$ and $v\phi:L\otimes (D^{n-1}(L))^{\otimes p-1} \to M$ by $\phi u(z,X_1, X_2,\cdots, X_{p-1})=\phi( u(z,X_1, X_2,\cdots, X_{p-1}))$,   $v\phi(z,X_1, X_2,\cdots, X_{p-1})=v(\phi z, \phi^{\otimes p-1}X_1, \phi^{\otimes p-1}X_2, \cdots,  \phi^{\otimes p-1}X_{p-1}),$ for all $(z, x_1, X_2,\cdots, X_{p-1})\in L\otimes (D^{n-1}(L))^{\otimes p-1} .$
Also, we define $d^p:C^p(\phi)\to C^{p+1}(\phi)$ by $$d^p(u,v,w)=(\delta^p u, \delta^p v, \phi u-v\phi -\delta^{p-1}w ),$$ for all $(u,v,w)\in C^p(\phi).$ Here the $\delta^p$'s denote coboundaries of the cochain complexes $C^n(L;L)$,  $C^n(M;M)$ and $ C^{p-1}(L;M)$.
\end{defn}
\begin{prop}
  $(C^{\ast}(\phi), d)$ is a cochain complex.
\end{prop}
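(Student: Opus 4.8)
The plan is to verify $d^{p+1}\circ d^p=0$ directly, treating each of the three components of $d^{p+1}\bigl(d^p(u,v,w)\bigr)$ in turn. Writing $(u',v',w')=d^p(u,v,w)=\bigl(\delta^p u,\ \delta^p v,\ \phi u-v\phi-\delta^{p-1}w\bigr)$, the definition of $d$ gives
\begin{equation*}
d^{p+1}(u',v',w')=\bigl(\delta^{p+1}\delta^p u,\ \ \delta^{p+1}\delta^p v,\ \ \phi u'-v'\phi-\delta^p w'\bigr).
\end{equation*}
The first two components vanish immediately, since $(C^{\ast}(L;L),\delta)$ and $(C^{\ast}(M;M),\delta)$ are cochain complexes, i.e.\ $\delta^{p+1}\circ\delta^p=0$ on each of them by \cite{NM}.

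Hence the whole content of the proposition sits in the third component. Substituting $w'$ and expanding gives
\begin{align*}
\phi u'-v'\phi-\delta^p w'
&=\phi(\delta^p u)-(\delta^p v)\phi-\delta^p\bigl(\phi u-v\phi-\delta^{p-1}w\bigr)\\
&=\bigl(\phi\delta^p u-\delta^p\phi u\bigr)-\bigl((\delta^p v)\phi-\delta^p(v\phi)\bigr)+\delta^p\delta^{p-1}w.
\end{align*}
The term $\delta^p\delta^{p-1}w$ vanishes because $(C^{\ast}(L;M),\delta)$ is itself a cochain complex. Since $u,v,w$ are independent, the surviving expression is identically zero exactly when the two intertwining identities
\begin{equation*}
\phi\circ\delta^p=\delta^p\circ\phi \text{ on } C^p(L;L),\qquad (\delta^p v)\phi=\delta^p(v\phi)\text{ for all } v\in C^p(M;M)
\end{equation*}
both hold; equivalently, post-composition with $\phi$ and pre-composition with $\phi$ are chain maps. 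This is the heart of the matter.

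I would establish each identity by running through the four summands of the coboundary of Definition \ref{rbd2} one at a time, using only that $\phi$ is an $n$-Hom-Liebniz morphism, namely $\phi\alpha=\beta\phi$ and $\phi[x_1,\dots,x_n]_L=[\phi x_1,\dots,\phi x_n]_M$, so that $\phi$ commutes with every structure map and bracket. For $\phi\circ\delta^p=\delta^p\circ\phi$, one applies $\phi$ to the $L$-valued output of $u$; in the first two summands the arguments are unchanged so $\phi$ passes through by linearity, while in the third and fourth summands $\phi$ carries the $L$-action-on-$L$ to the $L$-action-on-$M$ that defines $\delta^p$ on $C^{\ast}(L;M)$, the twisting being absorbed via $\phi\alpha^{p-1}=\beta^{p-1}\phi$. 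For $(\delta^p v)\phi=\delta^p(v\phi)$, one uses the defining formula $v\phi(z,X_1,\dots,X_{p-1})=v(\phi z,\phi^{\otimes p-1}X_1,\dots,\phi^{\otimes p-1}X_{p-1})$ and rewrites each $M$-bracket $[\phi X_i,\phi X_j]_M$ as $\phi[X_i,X_j]_L$ and each $M$-action as a pulled-back $L$-action, matching $\delta^p(v\phi)$ summand by summand.

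I expect the main obstacle to be the fourth summand $(X_{1.\alpha}f(\,\cdot\,,X_2,\dots,X_p)).\alpha^{p-1}(z)$, which expands into the $n$ separate actions $[\cdots]_i$, $0\le i\le n-1$. Here one must check that the $L$-representation structure used in $C^{\ast}(L;M)$ is precisely the pullback along $\phi$ of the adjoint representation of $M$; only under this compatibility do the two a priori different $L$-actions on $M$ — the one arising from $\phi u$ and the one arising from $v\phi$ — coincide with the single action appearing in $\delta^p$. Verifying this slot by slot for each $[\cdots]_i$ finishes both intertwining identities, makes the third component vanish, and so completes the proof that $(C^{\ast}(\phi),d)$ is a cochain complex.
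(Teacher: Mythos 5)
Your proof follows the same route as the paper's: the first two components vanish because $\delta^{p+1}\delta^p=0$ in $C^{\ast}(L;L)$ and $C^{\ast}(M;M)$, and the third reduces to $\delta^p(\phi u - v\phi)=\phi(\delta^p u)-(\delta^p v)\phi$ together with $\delta^p\delta^{p-1}w=0$. The only difference is that the paper dismisses the intertwining identity with ``one can easily see,'' whereas you correctly identify it as the real content and sketch its summand-by-summand verification, including the compatibility of the $L$-action on $M$ with the pullback along $\phi$ — a useful elaboration, but not a different argument.
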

\begin{proof}
  We have \begin{eqnarray*}
           d^{p+1}d^p(u,v,w)   &=& d^{p+1}(\delta^p u, \delta^p v, \phi u-v\phi -\delta^{p-1}w ) \\
             &=& (\delta^{p+1}\delta^p u, \delta^{p+1}\delta^p v, \phi(\delta^p u)- (\delta^p v)\phi- \delta^p( \phi u-v\phi -\delta^{p-1}w ))
          \end{eqnarray*} One can easily see that $\delta^p( \phi u-v\phi)=\phi (\delta^p u)- (\delta^p v)\phi$. So, since $\delta^{p+1}\delta^p u=0,$ $\delta^{p+1}\delta^p v=0$, $\delta^{p+1}\delta^p w=0$, we have $d^{p+1}d^n=0$. Hence we conclude the result.
\end{proof}
We call the cochain complex $(C^{\ast}(\phi),d)$  as deformation complex of $\phi,$ and the corresponding cohomology as  deformation cohomology of $\phi$. We denote the  deformation cohomology by $H^p(\phi)$, that is $H^p(\phi)=H^p(C^{\ast}(\phi),d)$. Next proposition relates $H^{\ast}(\phi)$ to $H^{\ast}(L,L)$,  $H^{\ast}(M,M)$ and $H^{\ast}(L,M)$.

\begin{prop}\label{rb-99}
  If $H^n(L,L)=0$,  $H^n(M,M)=0$ and $H^{n-1}(L,M)=0$, then $H^{n}(\phi)=0$.
\end{prop}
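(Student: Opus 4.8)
The plan is to run a direct cocycle-to-coboundary argument in the total complex $C^{\ast}(\phi)$, exploiting the mapping-cone structure of the differential and peeling off the three vanishing hypotheses one summand at a time. First I would unwind the cocycle condition: an element $(u,v,w)\in C^n(\phi)=C^n(L;L)\oplus C^n(M;M)\oplus C^{n-1}(L;M)$ satisfies $d^n(u,v,w)=0$ precisely when $\delta^n u=0$, $\delta^n v=0$, and $\phi u-v\phi-\delta^{n-1}w=0$. Thus $u$ is an $n$-cocycle in $C^{\ast}(L;L)$ and $v$ is an $n$-cocycle in $C^{\ast}(M;M)$, while the third component records a primitive for $\phi u-v\phi$.

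Since $H^n(L,L)=0$ and $H^n(M,M)=0$, I would choose $u'\in C^{n-1}(L;L)$ and $v'\in C^{n-1}(M;M)$ with $u=\delta^{n-1}u'$ and $v=\delta^{n-1}v'$. The goal then becomes producing $w'\in C^{n-2}(L;M)$ for which $(u,v,w)=d^{n-1}(u',v',w')$. The first two components match automatically by the choice of $u'$ and $v'$, so comparing third components reduces everything to solving the single equation $\delta^{n-2}w'=\phi u'-v'\phi-w$.

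The key step is to verify that the right-hand side $\phi u'-v'\phi-w$ is an $(n-1)$-cocycle in $C^{\ast}(L;M)$, so that the hypothesis $H^{n-1}(L,M)=0$ supplies the required $w'$. Applying $\delta^{n-1}$ and invoking the commutation identity $\delta^{p}(\phi u-v\phi)=\phi(\delta^{p}u)-(\delta^{p}v)\phi$ established in the previous proposition, I obtain
\begin{align*}
\delta^{n-1}(\phi u'-v'\phi-w)&=\phi(\delta^{n-1}u')-(\delta^{n-1}v')\phi-\delta^{n-1}w\\
&=\phi u-v\phi-\delta^{n-1}w=0,
\end{align*}
where the last equality is exactly the third cocycle condition. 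Hence $\phi u'-v'\phi-w$ is a cocycle, and since $H^{n-1}(L,M)=0$ there exists $w'\in C^{n-2}(L;M)$ with $\delta^{n-2}w'=\phi u'-v'\phi-w$. Then $d^{n-1}(u',v',w')=(u,v,w)$, so every $n$-cocycle is a coboundary and $H^n(\phi)=0$.

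The only real obstacle is the cocycle check for $\phi u'-v'\phi-w$, which hinges entirely on the commutation relation between $\delta$ and the operations $u\mapsto\phi u$ and $v\mapsto v\phi$. I would therefore make sure that identity is stated and used cleanly, exactly as recorded in the preceding proposition; once it is in hand, the remainder is a routine diagram chase in the cone-type complex $C^{\ast}(\phi)$, and the degree bookkeeping $C^p(\phi)=C^p(L;L)\oplus C^p(M;M)\oplus C^{p-1}(L;M)$ lines up the three required primitives in the correct degrees.
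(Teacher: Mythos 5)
Your proposal is correct and follows essentially the same route as the paper's own proof: unwind the cocycle condition, lift $u$ and $v$ to primitives $u'$, $v'$ using the first two vanishing hypotheses, check via the commutation identity $\delta(\phi u'-v'\phi)=\phi(\delta u')-(\delta v')\phi$ that $\phi u'-v'\phi-w$ is an $(n-1)$-cocycle in $C^{\ast}(L;M)$, and then use $H^{n-1}(L,M)=0$ to produce $w'$ with $d^{n-1}(u',v',w')=(u,v,w)$. The only difference is notational ($u',v',w'$ in place of the paper's $u_1,v_1,w_1$).
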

\begin{proof}
  Let $(u,v,w)\in C^n(\phi)$ be a cocycle, that is $d^n(u,v,w)=(\delta^n u, \delta^n v, \phi u-v\phi -\delta^{n-1}w )=0$. This implies that $\delta^n u=0$, $\delta^n v=0$, $\phi u-v\phi -\delta^{n-1}w =0$. $H^n(L,L)=0 \Rightarrow u=\delta^{n-1}u_1$ and  $H^n(M,M)=0\Rightarrow \delta^{n-1}v_1=v$, for some $u_1\in C^{n-1}(L,L)$ and $v_1\in C^{n-1}(M,M)$. So $0=\phi u-v\phi -\delta^{n-1}w=\phi(\delta^{n-1}u_1)-(\delta^{n-1}v_1)\phi-\delta^{n-1}w=\delta^{n-1}(\phi u_1)-\delta^{n-1}(v_1\phi)-\delta^{n-1}w=\delta^{n-1}(\phi u_1-v_1\phi-w)$. So $\phi u_1-v_1\phi-w\in C^{n-1}(L,M)$ is a cocycle. Now,  $H^{n-1}(L,M)=0\Rightarrow \phi u_1-v_1\phi-w=\delta^{n-2}w_1$, for some $w_1\in C^{n-2}(L,M)\Rightarrow \phi u_1-v_1\phi-\delta^{n-2}w_1=w$. Thus $(u,v,w)=(\delta^{n-1} u_1, \delta^{n-1 }v_1, \phi u_1-v_1\phi -\delta^{n-2}w_1 )=d^{n-1}(u_1,v_1,w_1)$, for some $(u_1,v_1,w_1)\in C^{n-1}(\phi).$  Thus every cocycle in $C^n(\phi)$ is a coboundary. Hence we conclude that $H^{n}(\phi)=0$.
\end{proof}
\section{ Cohomology of  a n-Hom-Liebniz algebra morphisms}\label{rbsec4}
In this section we introduce a cochain complex for every n-Hom-Liebniz algebra morphisms and call it deformation complex of the n-Hom-Liebniz algebra morphism. We   define Cohomology  of  a n-Hom-Liebniz algebra morphism  as the cohomologyof this cochain complex.
 
\begin{defn}\label{rb2}
Let  $(L, [.,\cdots,.], \alpha)$ and  $(M, [.,\cdots,.], \beta)$ be n-Hom-Liebniz algebras. A formal one-parameter deformation of a n-Hom-Liebniz algebra  morphism $\phi:L\to M$ is a triple $(\xi_t, \eta_t, \phi_t)$, in which:

\begin{enumerate}
  \item  $\xi_t=\sum_{i=0}^{\infty}\xi_i t^i$ is a formal one-parameter deformation for $L$.
  \item  $\eta_t=\sum_{i=0}^{\infty}\eta_it^i$ is a formal one-parameter deformation for  $M$.
  \item  $\phi_t=\sum_{i=0}^{\infty}\phi_it^i$, where $\phi_i:L\to M$ is  a module  homomorphism, $1\le i$, such that $\phi_t(\xi_t(X))=\eta_t\phi_t^{\otimes n}(X),$  for all $X\in L^{\otimes n}$ and $\phi_0=\phi$.
\end{enumerate}

Therefore a triple  $(\xi_t, \eta_t, \phi_t)$, as given above, is a formal one-parameter deformation of $\phi$ provided following properties are satisfied.
\begin{itemize}
  \item[(i)] \begin{eqnarray}\label{rbd20}
   && \xi_t(\xi_t(X), \bar{\alpha}(Y)) \nonumber \\
   && =\sum_{i=1}^{n} \xi_t(\alpha(x_1),\cdots , \alpha(x_{i-1}), \xi_t(x_i, y_1,\cdots, y_{n-1}), \alpha(x_{i+1}),\cdots, \alpha(x_n)),\nonumber\\
   &&
\end{eqnarray}
for all $X\in L^{\otimes n},\;\; Y\in L^{\otimes n-1},\;\; X=(x_1,\cdots,x_n),\;\; Y=(y_1,\cdots y_{n-1});$
   \item[(ii)] \begin{eqnarray}\label{rbd21}
   && \eta_t(\eta_t(X), \bar{\beta}(Y)) \nonumber \\
   && =\sum_{i=1}^{n} \eta_t(\beta(x_1),\cdots , \beta(x_{i-1}), \eta_t(x_i, y_1,\cdots, y_{n-1}), \beta(x_{i+1}),\cdots,\beta(x_n)),\nonumber\\
   &&
\end{eqnarray}
   for $X\in M^{\otimes n},\;\; Y\in M^{\otimes n-1},\;\; X=(x_1,\cdots,x_n),\;\; Y=(y_1,\cdots y_{n-1});$
  \item[(iii)]
  \begin{equation}\label{rbd22}
    \phi_t(\xi_t X)=\eta_t\phi_t^{\otimes n}(X),
  \end{equation}
  for all $X\in L^{\otimes n}$.
\end{itemize}
The conditions \ref{rbd20}, \ref{rbd21} and \ref{rbd22} are equivalent to following conditions respectively.

 \begin{eqnarray}\label{rbeqn1}
  &&\sum_{i+j=l}  \xi_i(\xi_j(X),\bar{\alpha}(Y))\nonumber \\
  &&= \sum_{i=1}^{n}\sum_{j+k=l} \xi_j(\alpha(x_1),\cdots , \alpha(x_{i-1}), \xi_k(x_i, y_1,\cdots, y_{n-1}), \alpha(x_{i+1}),\cdots, \alpha(x_n)),\nonumber\\
    &&
 \end{eqnarray}
for  $X\in L^{\otimes n},\;\; Y\in L^{\otimes n-1},\;\; X=(x_1,\cdots,x_n),\;\; Y=(y_1,\cdots y_{n-1})$ and   integers $0\le l$.
    \begin{eqnarray}\label{rbeqn2}
  &&\sum_{i+j=l}  \eta_i(\eta_j(X), \bar{\beta}Y)\nonumber \\
  &&= \sum_{i=1}^{n}\sum_{j+k=l} \eta_j(\beta(x_1),\cdots , \eta(x_{i-1}), \eta_k(x_i, y_1,\cdots, y_{n-1}), \beta(x_{i+1}),\cdots, \beta(x_n)),\nonumber\\
    &&
 \end{eqnarray}
for  $X\in M^{\otimes n},\;\; Y\in M^{\otimes n-1},\;\; X=(x_1,\cdots,x_n),\;\; Y=(y_1,\cdots y_{n-1})$ and   integers $0\le l$.
      \begin{eqnarray}\label{rbeqn3}
         &&\sum_{i+j=l}\phi_i\xi_j(X) \nonumber \\
         &&=\sum_{\substack{\{i+j_1+\cdots +j_n\}=l}}\eta_i(\phi_{j_1}x_1,\cdots,\phi_{j_n}x_n)
      \end{eqnarray}

\end{defn}

\section{Equivalence of  deformations}\label{rbsec5}

 Recall from \cite{DY1} that a formal isomorphism between the  deformations $\xi_t$ and $\tilde{\xi_t}$ of a module  M is a $k[[t]]$-linear automorphism  $\Psi_t:M[[t]]\to M[[t]]$ of the  form  $\Psi_t=\sum_{i\ge 0}\psi_it^i$, where each $\psi_i$ is a $k$-linear map $M\to M$, $\psi_0(a)=a$, for all $a\in A$ and $\tilde{\xi_t}(r)\Psi_t(m)=\Psi_t(\xi_t(r)m),$ for all $r\in A$, $m\in M$
\begin{defn}
 Let   $(\xi_t,\eta_t,\phi_t)$  and $(\tilde{\xi_t},\tilde{\eta_t},\tilde{\phi_t})$ be two  deformations of $\phi$.  A  formal isomorphism from $(\xi_t,\eta_t,\phi_t)$  to  $(\tilde{\xi_t},\tilde{\eta_t},\tilde{\phi_t})$ is a pair  $(\Psi_t,\Theta_t)$, where $\Psi_t:M[[t]]\to M[[t]]$ and $\Theta_t:N[[t]]\to N[[t]]$ are formal isomorphisms from $\xi_t$ to $\tilde{\xi_t}$ and $\eta_t$ to $\tilde{\eta_t}$, respectively,  such that $$\tilde{\phi_t}\circ\Psi_t=\Theta_t\circ\phi_t.$$
  Two formal  deformations $(\xi_t,\eta_t,\phi_t)$  and $(\tilde{\xi_t},\tilde{\eta_t},\tilde{\phi_t})$ are said to be equivalent if there exists a formal isomorphism  $(\Psi_t,\Theta_t)$ from $(\xi_t,\eta_t,\phi_t)$ to  $(\tilde{\xi_t},\tilde{\eta_t},\tilde{\phi_t})$.
\end{defn}
\begin{defn}
  Any  deformation of $\phi:M\to N$ that is equialent to the deformation $(\xi_0,\eta_0,\phi)$ is said to be a trivial deformation.
\end{defn}
\begin{thm}
  The cohomology class of the infinitesimal of a  deformation $(\xi_t,\eta_t,\phi_t)$ of $\phi:A\to B$ is determined by the equivalence class of $(\xi_t,\eta_t,\phi_t)$.
\end{thm}
\begin{proof}
  Let  $(\Psi_t,\Theta_t)$ from  $(\xi_t,\eta_t,\phi_t)$ to  $(\tilde{\xi_t},\tilde{\eta_t},\tilde{\phi_t})$ be a  formal  isomorphism. So, we have  $\tilde{\xi_t}\Psi_t=\Psi_t\xi_t,$ $\tilde{\eta_t}\Theta_t=\Theta_t \eta_t,$ and   $\tilde{\phi_t}\circ\Psi_t=\Theta_t\circ\phi_t.$ This implies that $\xi_1-\tilde{\xi_1}=\delta^0\psi_1$, $\eta_1-\tilde{\eta_1}=\delta^0\theta_1$ and $\phi_1-\tilde{\phi_1}=\phi\psi_1-\theta_1\phi$. So we have $d^1(\psi_1,\theta_1,0)=(\xi_1,\eta_1,\phi_1)-(\tilde{\xi_1},\tilde{\eta_1},\tilde{\phi_1}).$ This finishes the proof.
\end{proof}

\end{document}